\newcommand{\tensor}{\otimes}
\newcommand{\colim}{\operatorname{colim}}
\newcommand{\Spec}{\operatorname{Spec}}
\newcommand{\isomt}{{\stackrel{{\scriptscriptstyle{\sim}}}{\;\rightarrow\;}}}
\newcommand{\bpi}{\boldsymbol{\pi}}
\renewcommand{\hom}{\operatorname{Hom}}
\newcommand{\Q}{{\mathbb Q}}
\newcommand{\Z}{{\mathbb Z}}
\newcommand{\aone}{{\mathbb A}^1}
\newcommand{\pone}{{\mathbb P}^1}
\newcommand{\gm}{{{\mathbb G}_{m}}}
\newcommand{\ho}[1]{{\mathcal H}({#1})}
\newcommand{\Daone}{{\operatorname{D}_{\aone}}}
\newcommand{\SH}{{\mathbf{SH}}}
\newcommand{\Sm}{{\mathcal Sm}}
\newcommand{\Spc}{{\mathcal Spc}}
\newcommand{\Ab}{{\mathcal Ab}}
\renewcommand{\H}{{{\mathbf H}}}
\newcommand{\F}{{\mathcal F}}
\newcounter{intro}
\theoremstyle{plain}
\newtheorem{thm}{Theorem}[section]
\newtheorem{lem}[thm]{Lemma}
\newtheorem{cor}[thm]{Corollary}
\newtheorem{prop}[thm]{Proposition}
\newtheorem*{thm*}{Theorem}
\newtheorem*{problem*}{Problem}
\newtheorem{thmintro}{Theorem}
\theoremstyle{definition}
\newtheorem{defn}[thm]{Definition}
\theoremstyle{remark}
\newtheorem{rem}[thm]{Remark}
\newtheorem{remintro}[thmintro]{Remark}
\newtheorem{ex}[thm]{Example}
\numberwithin{equation}{section}
\begin{document}
\pagestyle{fancy}
\renewcommand{\sectionmark}[1]{\markright{\thesection\ #1}}
\fancyhead{}
\fancyhead[LO,R]{\bfseries\footnotesize\thepage}
\fancyhead[LE]{\bfseries\footnotesize\rightmark}
\fancyhead[RO]{\bfseries\footnotesize\rightmark}
\chead[]{}
\cfoot[]{}
\setlength{\headheight}{1cm}

\author{\begin{small}Aravind Asok\thanks{Aravind Asok was partially supported by National Science Foundation Awards DMS-0900813 and DMS-0966589.}\end{small} \\ \begin{footnotesize}Department of Mathematics\end{footnotesize} \\ \begin{footnotesize}University of Southern California\end{footnotesize} \\ \begin{footnotesize}Los Angeles, CA 90089-2532 \end{footnotesize} \\ \begin{footnotesize}\url{asok@usc.edu}\end{footnotesize} \and \begin{small}Christian Haesemeyer\thanks{Christian Haesemeyer was partially supported by National Science Foundation Award DMS-0966821.}\end{small} \\ \begin{footnotesize}Department of Mathematics\end{footnotesize} \\ \begin{footnotesize}University of California, Los Angeles\end{footnotesize} \\ \begin{footnotesize}Los Angeles, CA 90095-1555 \end{footnotesize} \\ \begin{footnotesize}\url{chh@math.ucla.edu}\end{footnotesize}}

\title{{\bf Stable $\aone$-homotopy and $R$-equivalence}}
\date{}
\maketitle

\begin{abstract}
We prove that existence of a $k$-rational point can be detected by the stable $\aone$-homotopy category of $S^1$-spectra, or even a ``rationalized" variant of this category.
\end{abstract}

%\begin{footnotesize}
%\tableofcontents
%\end{footnotesize}

\section{Introduction}
Suppose $k$ is a field and $X$ a smooth proper $k$-variety.  By the Lang-Nishimura lemma \cite{Nishimura}, one knows that existence of a $k$-point is a $k$-birational invariant.  By a remark of Morel and Voevodsky, one also knows that existence of a $k$-rational point is an unstable $\aone$-homotopy invariant; see, e.g., \cite[\S 3 Remark 2.5]{MV}, where it is observed that this is a consequence of the fact that the Nisnevich topology is used in the construction of the unstable $\aone$-homotopy category.  The purpose of this note is to, in a sense, combine the two results above and to show that the ability to detect rational points persists in the Morel-Voevodsky stable $\aone$-homotopy category of $S^1$-spectra as well as in Morel's $\aone$-derived category and even the $\aone$-derived category with $\Q$-coefficients.  Very loosely speaking, our results say that existence of rational points can be detected by purely cohomological means.

Write $\SH^{s}_k$ for the Morel-Voevodsky stable $\aone$-homotopy category of $S^1$-spectra (see \cite[Definition 4.1.1]{MStable} for a precise definition).  Let $\Sigma^{\infty}_s X_+$ denote the $\aone$-localization of the simplicial suspension spectrum of $X$ with a disjoint basepoint attached.  The $0$-th $S^1$-stable $\aone$-homotopy sheaf of $X$, denoted $\bpi_0^{s}(X_+)$, is the Nisnevich sheaf associated with the presheaf on $\Sm_k$
\[
U \longmapsto \hom_{\SH^{s}_k}(\Sigma^{\infty}_s U_+,\Sigma^{\infty}_s X_+).
\]
The structure morphism $X \to \Spec k$ induces a morphism of sheaves $\bpi_0^s(X_+) \to \bpi_0^s{\Spec k_+}$.  The sheaf $\bpi_0^{s}(X_+)$ is a birational invariant of smooth, proper $k$-varieties.  If $X$ has a $k$-rational point, the map $\bpi_0^s(X_+) \to \bpi_0^{s}(\Spec k_+)$ is a split epimorphism.  We prove a converse to this statement.

\begin{thmintro}
\label{thmintro:main}
If $X$ is a smooth proper $k$-variety, then the following conditions are equivalent:
\begin{itemize}
\item[i)] $X$ has a $k$-rational point,
\item[ii)] there is a stable $\aone$-homotopy class of maps $\Sigma^{\infty}_s \Spec k_+ \to \Sigma^{\infty}_s X_+$ splitting the structure map $\Sigma^{\infty}_s X_+ \to \Sigma^{\infty}_s \Spec k_+$, and
\item[iii)] the morphism of sheaves $\bpi_0^s(X_+) \to \bpi_0^{s}(\Spec k_+)$ is a split epimorphism.
\end{itemize}
\end{thmintro}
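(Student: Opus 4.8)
I would prove the three conditions equivalent by establishing the cycle $\mathrm{(i)}\Rightarrow\mathrm{(ii)}\Rightarrow\mathrm{(iii)}\Rightarrow\mathrm{(i)}$, of which the first two implications are formal and the last carries all the content. Throughout write $\mathbb{S}=\Sigma^{\infty}_s \Spec k_+$ for the sphere spectrum. For $\mathrm{(i)}\Rightarrow\mathrm{(ii)}$: a rational point is a section $x\colon \Spec k\to X$ of the structure morphism $\pi\colon X\to\Spec k$, and since $\pi\circ x=\mathrm{id}$, applying $\Sigma^{\infty}_s(-)_+$ and $\aone$-localizing produces $\Sigma^{\infty}_s x_+\colon \mathbb{S}\to\Sigma^{\infty}_s X_+$ with $(\Sigma^{\infty}_s\pi_+)\circ(\Sigma^{\infty}_s x_+)=\mathrm{id}$ (this is the observation recalled just before the statement). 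For $\mathrm{(ii)}\Rightarrow\mathrm{(iii)}$: apply the functor $\bpi_0^s$ to the stable splitting; functoriality carries a section of $\Sigma^{\infty}_s X_+\to\Sigma^{\infty}_s\Spec k_+$ to a section of $\bpi_0^s(X_+)\to\bpi_0^s(\Spec k_+)$.

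\textbf{Reduction for the converse.} The heart of the matter is $\mathrm{(iii)}\Rightarrow\mathrm{(i)}$. I would begin by evaluating the sheaf-theoretic splitting over $\Spec k$. Since $\Spec k$ is a point for the Nisnevich topology, sheafification does not alter sections there, so $\bpi_0^s(X_+)(\Spec k)=\hom_{\SH^{s}_k}(\mathbb{S},\Sigma^{\infty}_s X_+)$ and $\bpi_0^s(\Spec k_+)(\Spec k)=\hom_{\SH^{s}_k}(\mathbb{S},\mathbb{S})=\Z$, with the identity corresponding to $1$. A splitting thus yields a class $\sigma\in\bpi_0^s(X_+)(\Spec k)$ of degree $1$ (this incidentally also gives $\mathrm{(iii)}\Rightarrow\mathrm{(ii)}$). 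Next I would invoke Morel's homotopy $t$-structure on $\SH^{s}_k$, whose heart is the category of strictly $\aone$-invariant Nisnevich sheaves: the stable Hurewicz identification gives $\bpi_0^s(X_+)\cong\tilde H_0^{\aone}(X_+)$, the zeroth $\aone$-homology sheaf, realized as the free strictly $\aone$-invariant sheaf attached to $X$, and under this identification the structure map is the augmentation to $\Z=\tilde H_0^{\aone}(\Spec k_+)$.

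\textbf{From a degree-one class to a point.} The plan is then to descend from the additive datum $\sigma$ to the sheaf of $\aone$-connected components $\pi_0^{\aone}(X)$. Using Morel's unramifiedness for strictly $\aone$-invariant sheaves (injectivity of $F(U)\to F(k(U))$ for smooth connected $U$) together with the given birational invariance of $\bpi_0^s(X_+)$, I would reduce the analysis of $\sigma$ to generic behavior and to a convenient smooth proper model, and argue that a degree-one class in $\tilde H_0^{\aone}(X_+)(\Spec k)$ forces $\pi_0^{\aone}(X)(\Spec k)\neq\emptyset$. By the identification of $\pi_0^{\aone}(X)(k)$ with the set $X(k)/R$ of $R$-equivalence classes for smooth proper $X$ (Asok--Morel), non-emptiness produces an honest $k$-rational point; here Lang--Nishimura, equivalently the Nisnevich-topology remark of Morel--Voevodsky, guarantees that $\aone$-localization creates no spurious rational points on a smooth proper variety, so that $\pi_0^{\aone}(X)(k)\neq\emptyset\Rightarrow X(k)\neq\emptyset$.

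\textbf{The main obstacle.} The decisive difficulty is the passage from the \emph{homological} sheaf $\bpi_0^s(X_+)\cong\tilde H_0^{\aone}(X_+)$ back to the \emph{set-valued} $\pi_0^{\aone}(X)$: the free strictly $\aone$-invariant sheaf is genuinely larger and more rigid than the naive free sheaf $\Z[\pi_0^{\aone}(X)]$, so the elementary observation that a free abelian group on the empty set has no degree-one element does not transfer directly. One must rule out that a degree-one section of $\tilde H_0^{\aone}(X_+)$ over $k$ be manufactured from sections over proper field extensions when $X(k)=\emptyset$. This is exactly where working with $S^1$-spectra, rather than with $\gm$-inverted spectra, is essential: in $\SH^{s}_k$ the Morel--Voevodsky finite transfers are unavailable, so a degree-one class cannot arise from a closed point of degree $>1$---precisely the mechanism by which a degree-one zero-cycle fails to yield a rational point on, e.g., a Severi--Brauer variety. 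Establishing this rigidity, through a careful birational and unramifiedness analysis of $\tilde H_0^{\aone}(X_+)$ over function fields, is the technical crux of the argument.
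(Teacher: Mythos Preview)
Your overall strategy---$(i)\Rightarrow(ii)\Rightarrow(iii)$ formally, then use the Hurewicz isomorphism $\bpi_0^s(X_+)\cong\H_0^{\aone}(X)$ and reduce to showing that a degree-$1$ class over $\Spec k$ forces $X(k)/R\neq\emptyset$---matches the paper. You also correctly locate the crux: one cannot simply read off $\pi_0^{\aone}(X)(k)$ from the strictly $\aone$-invariant sheaf $\H_0^{\aone}(X)$, because the naive free sheaf $\Z[\pi_0^{\aone}(X)]$ need not be strictly $\aone$-invariant.

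However, your last paragraph diagnoses the obstacle without actually resolving it; ``a careful birational and unramifiedness analysis of $\tilde H_0^{\aone}(X_+)$ over function fields'' is a description of the problem, not a method. The paper supplies exactly the missing idea, and it is short: one passes not to $\pi_0^{\aone}(X)$ but to the \emph{birational} connected-components sheaf $\bpi_0^{b\aone}(X)$ of Asok--Morel (which also has $L$-sections $X(L)/R$). The point is that a birational $\aone$-invariant presheaf $\F$ is automatically a Nisnevich sheaf, and $\Z(\F)$ is Nisnevich flasque, hence strictly $\aone$-invariant. Consequently the naive free abelian sheaf $\Z(\bpi_0^{b\aone}(X))$ is already $\aone$-local, and the universal property of $\H_0^{\aone}(X)$ (as initial strictly $\aone$-invariant sheaf under $X$) furnishes a map
\[
\phi_X\colon \H_0^{\aone}(X)\longrightarrow \Z(\bpi_0^{b\aone}(X))
\]
compatible with the augmentations to $\Z$. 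A splitting of $\H_0^{\aone}(X)\to\Z$ composed with $\phi_X$ gives a degree-$1$ element of $\Z(\bpi_0^{b\aone}(X))(k)=\Z[X(k)/R]$, and now the elementary observation you yourself wanted---that the free abelian group on the empty set has no element of degree $1$---applies directly. In short, birationality is precisely what turns the ``naive free abelian group'' picture into the correct one, and this is what your sketch is missing.
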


That (i) $\Longrightarrow$ (ii) $\Longrightarrow$ (iii) is clear, and the work goes into showing (iii) $\Longrightarrow$ (i).  Proposition \ref{prop:hurewicz} allows us to show that $\bpi_0^s(\Spec k)_+ = \Z$, so (iii) can be made more explicit.  For a corresponding statement with $\Q$-coefficients see Remark \ref{rem:rationalizedversion}.  To put this result in context, we observe how these results combined with those of \cite{AH} give a framework for comparing rational points and $0$-cycles of degree $1$.

\begin{remintro}
Let $\Sigma_{\pone}$ denote the operation of smashing with the simplicial suspension spectrum of $(\pone,\infty)$, and let $\Omega_{\pone}$ be the adjoint looping functor.  If $E$ is any $S^1$-spectrum, there is a map $E \to \Omega_{\pone}\Sigma_{\pone} E$.  We can iterate this functor to obtain a tower
\[
E \longrightarrow \Omega_{\pone}\Sigma_{\pone} E \longrightarrow \Omega_{\pone}^2\Sigma_{\pone}^2 E \cdots.
\]
The $0$-th $\pone$-stable $\aone$-homotopy sheaf of $E$, denoted $\bpi_0^{s\aone}(E)$, can be computed by means of the formula
\[
\bpi_0^{s\aone}(E) = \colim_n \bpi_0^{s}(\Omega_{\pone}^n\Sigma_{\pone}^n E).
\]
The structure map $X \to \Spec k$ induces a morphism $\bpi_0^{s\aone}(X_+) \to \bpi_0^{s\aone}(\Spec k_+)$.  One says that $X$ has a rational point up to stable $\aone$-homotopy if the latter map is a split epimorphism.  By \cite[Theorem 1]{AH}, if $k$ is an infinite perfect field having characteristic unequal to $2$, we know that a smooth proper $k$-scheme $X$ has a $0$-cycle of degree $1$ if and only if it has a rational point up to stable $\aone$-homotopy.  Thus, under the stated hypotheses on $k$, the difference between a $0$-cycle of degree $1$ and $k$-rational point is measured by the difference between $S^1$-stable and $\pone$-stable $\aone$-homotopy theory.  The existence of such a connection between rational points and $0$-cycles of degree $1$ was suggested in \cite[p. 395-6]{LevineSlices}.
\end{remintro}

\subsubsection*{Acknowledgements}
This work was originally begun in the context of a project between the authors and Fabien Morel; we thank him for his collaboration in the early stages, especially his insistence on studying the sheaf $\Z(\bpi_0^{b\aone}(X))$ discussed below.  We also thank Marc Levine for interesting discussions.

%\begin{thmintro}
%\label{thmintro:unstableh0description}
%Suppose $X$ is a smooth proper variety over a field $k$.  There is a canonical isomorphism
%\[
%\H_0^{\aone}(X) \isomto \Z\pi_0^{b\aone}(X).
%\]
%\end{thmintro}

\section{Proof of Theorem \ref{thmintro:main}}
Let us introduce some notation for the rest of the note.  Throughout, suppose $k$ is a field.  Let $\Sm_k$ denote the category of schemes that are separated, smooth, and have finite type over $\Spec k$.  Write $\Spc_k$ for the category of simplicial Nisnevich sheaves of sets on $\Sm_k$; objects of this category will be called spaces.  We identify Nisnevich sheaves with the corresponding simplicial objects.

Write $\ho{k}$ for the Morel-Voevodsky unstable $\aone$-homotopy category.  This category is constructed from the category $\Spc_k$ by localizing at the class of $\aone$-weak equivalences (see \cite[\S 3.2]{MV}).  We write $\SH^{s}_k$ for the stable $\aone$-homotopy category of $S^1$-spectra, e.g., as defined in \cite[\S 5]{MStable}.  Loosely speaking, this category is obtained from $\ho{k}$ by formally inverting the simplicial suspension operation.  We write $\SH_k$ for the stable $\aone$-homotopy category of $\pone$-spectra, e.g., as defined in \cite{JardineSymmetric}; this category is obtained from $\SH^s_k$ by further inverting the operation of smashing with the suspension spectrum of $\gm$.

Recall that a presheaf of sets $\F$ on $\Sm_k$ is called {\em $\aone$-invariant}, if for any smooth scheme $U$ the map $\F(U) \to \F(U \times \aone)$ induced by pullback along the projection $U \times \aone \to U$ is a bijection.  If $\mathcal{X}$ is any space, we write $\Z(\mathcal{X})$ for the simplicial sheaf of abelian groups freely generated by the simplices of $\mathcal{X}$.  The normalized chain complex of $\Z({\mathcal X})$, for which we will write $C_*(\Z({\mathcal X}))$, is a chain complex of sheaves of abelian groups.

Write $D(\Ab_{Nis}(k))$ for the (unbounded) derived category of Nisnevich sheaves of abelian groups on $\Sm_k$.  A complex of sheaves of abelian groups $A$ on $\Sm_k$ is called {\em $\aone$-local} if for any complex $B$ the map
\[
\hom_{D(\Ab_{Nis}(k))}(B,A) \longrightarrow \hom_{D(\Ab_{Nis}(k))}(B \tensor \Z(\aone),A)
\]
is a bijection.  A sheaf $\F$ of abelian groups is said to be {\em strictly $\aone$-invariant} if it is $\aone$-invariant viewed as a complex of sheaves situated in degree $0$.  Consider the full subcategory of $D(\Ab_{Nis}(k))$ consisting of $\aone$-local complexes; the inclusion of this subcategory into $D(\Ab_{Nis}(k))$ admits a left adjoint $L_{\aone}$ called the functor of $\aone$-localization \cite[Proposition 4.3]{CisinskiDeglise}.  Morel's $\aone$-derived category $\Daone(k)$ is (equivalent to) the full subcategory of the derived category of Nisnevich sheaves of abelian groups consisting of $\aone$-local complexes.

Set $C_*^{\aone}(\mathcal{X}) := L_{\aone}C_*(\Z({\mathcal X}))$; this complex is called the $\aone$-chain complex of $\mathcal{X}$.   The $0$-th $\aone$-homology sheaf of $\mathcal{X}$, denoted $\H_0^{\aone}({\mathcal{X}})$, is just the $0$-th homology sheaf of $C_*^{\aone}(\mathcal{X})$.  The functor $\mathcal{X} \mapsto C_*^{\aone}(\mathcal{X})$ induces a functor $\ho{k} \to \Daone(k)$.  The suspension isomorphism for homology shows that this functor factors through a functor $\SH^s_k \to \Daone(k)$ that we will call abelianization.  For recollections about the $\aone$-derived category, see \cite[\S 3.2]{MField}.

\subsubsection*{The Hurewicz homomorphism}
The abelianization functor induces a Hurewicz morphism $\bpi_0^{s}(\mathcal{X}_+) \to \H_0^{\aone}(\mathcal{X})$ (note: the definition of $\bpi_0^s(-)$ given in the introduction makes sense for any $\mathcal{X} \in \Spc_k$).  The following result is a consequence of the stable $\aone$-connectivity theorem \cite[Theorem 6.1.8]{MStable}, which states that $(-1)$-connected spectra or complexes are preserved by $\aone$-localization.

\begin{prop}
\label{prop:hurewicz}
If $\mathcal{X}$ is a space, the canonical morphism $\bpi_0^{s}(\mathcal{X}_+) \to \H_0^{\aone}(\mathcal{X})$ is an isomorphism of strictly $\aone$-invariant sheaves.
\end{prop}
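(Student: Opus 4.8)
The plan is to deduce the isomorphism from the classical stable Hurewicz theorem, applied sectionwise, together with a single application of the stable $\aone$-connectivity theorem to commute $\aone$-localization past $0$-truncation. Write $\Sigma^{\infty}_s\mathcal{X}_+$ and $C_*(\Z(\mathcal{X}))$ for the simplicial (not yet $\aone$-local) suspension spectrum and chain complex, let $a$ denote the abelianization functor $\SH^s_k \to \Daone(k)$, and for an object $E$ of either category write $\underline{\pi}_0(E)$ for its associated $0$-th homotopy (resp. homology) sheaf. Unwinding the definitions, $\bpi_0^s(\mathcal{X}_+) = \underline{\pi}_0(L_{\aone}\Sigma^{\infty}_s\mathcal{X}_+)$ and $\H_0^{\aone}(\mathcal{X}) = \underline{\pi}_0(L_{\aone}C_*(\Z(\mathcal{X})))$, with the Hurewicz morphism induced by $a$. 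The key observation is that both $\Sigma^{\infty}_s\mathcal{X}_+$ and $C_*(\Z(\mathcal{X}))$ are $(-1)$-connected, so that the connectivity theorem applies to their $\aone$-localizations.

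First I would record the non-localized Hurewicz isomorphism. Sectionwise, the $0$-th stable homotopy group of a $(-1)$-connected spectrum coincides with its $0$-th homology; sheafifying, $a$ induces an isomorphism between the (non-$\aone$-local) sheaf $\underline{\pi}_0(\Sigma^{\infty}_s\mathcal{X}_+)$ and $\underline{\pi}_0(C_*(\Z(\mathcal{X})))$. Call this common sheaf $\F$. Thus before localization the two objects have $0$-truncations $H(\F)$ and $\F[0]$ respectively, corresponding to one another under $a$ in degree $0$.

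The heart of the argument is to show that $L_{\aone}$ commutes with $\underline{\pi}_0$ on $(-1)$-connected objects. Given such an $E$, consider the triangle $\tau_{\ge 1}E \to E \to \tau_{\le 0}E$. The object $\tau_{\ge 1}E$ is $0$-connected, hence the suspension of a $(-1)$-connected object; since $L_{\aone}$ commutes with suspension and, by the connectivity theorem, preserves $(-1)$-connectedness, $L_{\aone}\tau_{\ge 1}E$ is again $0$-connected and therefore vanishes in all degrees $\le 0$. The long exact sequence of homotopy sheaves then gives $\underline{\pi}_0(L_{\aone}E) \cong \underline{\pi}_0(L_{\aone}\tau_{\le 0}E)$. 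Applying this to $\Sigma^{\infty}_s\mathcal{X}_+$ in $\SH^s_k$ and to $C_*(\Z(\mathcal{X}))$ in $\Daone(k)$, and using naturality of the Hurewicz map, reduces the whole comparison to the truncations $H(\F)$ and $\F[0]$. A routine adjunction computation then identifies each of $\underline{\pi}_0(L_{\aone}H(\F))$ and $\underline{\pi}_0(L_{\aone}\F[0])$ with the value at $\F$ of the left adjoint to the inclusion of strictly $\aone$-invariant sheaves; this both shows the two sheaves are strictly $\aone$-invariant (the connectivity theorem identifies the hearts of the two homotopy $t$-structures with that category) and exhibits the Hurewicz map as the resulting canonical isomorphism.

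The main obstacle is exactly this interchange of $L_{\aone}$ with $0$-truncation. A priori $L_{\aone}$ is merely a Bousfield localization, and it could in principle manufacture new homotopy in degrees $\le 0$ out of the higher homotopy of $\tau_{\ge 1}E$ — note for instance that $a(H(\F))$ is genuinely not concentrated in degree $0$ — which would destroy the comparison; the stable $\aone$-connectivity theorem is precisely the input that forbids this. A secondary, bookkeeping difficulty is that one is comparing two distinct localizations, one in $\SH^s_k$ and one in $\Daone(k)$; I would avoid comparing them directly by routing the final identification through the shared universal property on strictly $\aone$-invariant sheaves.
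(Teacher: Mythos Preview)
Your proposal is correct and follows the same approach as the paper: the paper simply asserts the result as a consequence of the stable $\aone$-connectivity theorem \cite[Theorem 6.1.8]{MStable} without further detail, and your argument is exactly the standard unpacking of that assertion---the classical (unlocalized) stable Hurewicz isomorphism in degree $0$, combined with the truncation argument showing that $L_{\aone}$ commutes with $\underline{\pi}_0$ on $(-1)$-connected objects, which is where the connectivity theorem enters. Your final identification of both sides via the universal property of the reflection onto strictly $\aone$-invariant sheaves is a clean way to handle the bookkeeping between the two ambient categories.
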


Because of this proposition, we can (and will) replace the $0$-th stable $\aone$-homotopy sheaf by the $0$-th $\aone$-homology sheaf of a space in the sequel.  The next result follows immediately from Proposition \ref{prop:hurewicz} and, e.g., \cite[Theorem 2.2.9]{ABirational}.

\begin{cor}
If $k$ is an infinite field, the sheaf $\bpi_0^{s}(X_+)$ is a birational invariant of smooth and proper $k$-varieties.
\end{cor}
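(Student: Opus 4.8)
The plan is to deduce the statement from the Hurewicz isomorphism of Proposition~\ref{prop:hurewicz} together with the birational invariance of the $0$-th $\aone$-homology sheaf. First, Proposition~\ref{prop:hurewicz} identifies $\bpi_0^s(X_+)$ with $\H_0^{\aone}(X)$ as strictly $\aone$-invariant sheaves on $\Sm_k$, and this identification is natural in $X$. It therefore transports any birational invariance enjoyed by $\H_0^{\aone}(X)$ to $\bpi_0^s(X_+)$, so the corollary reduces entirely to the assertion that, for $X$ smooth and proper over an infinite field $k$, the sheaf $\H_0^{\aone}(X)$ depends only on the birational class of $X$.

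To establish that reduction I would exploit two features of strictly $\aone$-invariant sheaves. The first is that such a sheaf $M$ is \emph{unramified}: for a smooth connected $U$ the restriction $M(U) \to M(k(U))$ is injective, with image cut out by unramifiedness conditions indexed by the codimension-one points of $U$ (the Gersten/contraction formalism available over perfect, and more generally infinite, base fields). Consequently $\H_0^{\aone}(X)$ is determined by its sections over finitely generated field extensions $L/k$ together with the divisorial valuations of $k(X)$. The second feature is that, for $X$ \emph{proper}, the valuative criterion of properness allows one to extend $L$-points across codimension-one loci, so that these sections admit a description that is manifestly birational --- concretely, $\H_0^{\aone}(X)(L)$ should be the free abelian group on the set $X(L)/R$ of $R$-equivalence classes, and $X(L)/R$ is a classical birational invariant of smooth proper varieties. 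Matching the function fields $k(X) \cong k(Y)$ and the codimension-one valuations under a birational equivalence $X \dashrightarrow Y$ then yields the desired isomorphism of sheaves; this is exactly the content of \cite[Theorem~2.2.9]{ABirational}, which I would invoke rather than reprove.

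The main obstacle is precisely this birational invariance of $\H_0^{\aone}$, and its subtlety is that a birational map is only a rational map: it does not directly induce a morphism of sheaves, so one cannot simply pull back or push forward sections. The work lies in using properness to extend sections and the unramifiedness of strictly $\aone$-invariant sheaves to descend the isomorphism of function fields to an isomorphism of the associated sheaves, and it is here that the hypothesis that $k$ is infinite enters --- it guarantees enough rational points for the general-position and $R$-equivalence arguments that underlie \cite[Theorem~2.2.9]{ABirational}. Granting that result, the corollary follows immediately.
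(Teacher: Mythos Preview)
Your approach is essentially the paper's: reduce via Proposition~\ref{prop:hurewicz} to the birational invariance of $\H_0^{\aone}(X)$ and then invoke \cite[Theorem~2.2.9]{ABirational}. One caution about your heuristic sketch: the identification of $\H_0^{\aone}(X)(L)$ with the free abelian group on $X(L)/R$ is only an expectation in the paper (see the remark following the definition of $\phi_X$), not the actual mechanism behind \cite[Theorem~2.2.9]{ABirational}, so you should not present it as part of the argument.
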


\subsection{Strict $\aone$-invariance and birationality}
\begin{defn}
Suppose $\F$ is a presheaf of sets on $\Sm_k$.  We say $\F$ is {\em birational} if for any open dense immersion $U \to U'$ in $\Sm_k$, the map $\F(U') \to \F(U)$ is an isomorphism.
\end{defn}

In the following lemma, we summarize some technical properties of birational presheaves.  This result is ``well known to the experts" and we include it for the convenience of the reader; results along these lines can also be found in, e.g., \cite[\S 2]{LevineSlices}.

\begin{lem}
\label{lem:birationalaoneinvariantimpliesstrictlyaoneinvariant}
If $\F$ is a birational presheaf, the free presheaf of abelian groups $\Z(\F)$ is also birational, and both $\F$ and $\Z(\F)$ are Nisnevich sheaves.  If $\F$ is furthermore $\aone$-invariant, then
$\Z(\F)$ is Nisnevich flasque, and $\Z(\F)$ is strictly $\aone$-invariant.
\end{lem}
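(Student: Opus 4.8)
The plan is to treat the four assertions in turn, the heart of the matter being a single geometric observation about elementary distinguished squares together with the cd-structure description of Nisnevich cohomology. That $\Z(\F)$ is birational is immediate: the free-abelian-group functor $S \mapsto \Z[S]$ carries bijections to isomorphisms, so applying it to the bijections $\F(U') \isomto \F(U)$ furnished by birationality of $\F$ yields isomorphisms $\Z(\F)(U') \isomto \Z(\F)(U)$.

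For the sheaf assertions I would use the standard criterion (Morel--Voevodsky) that a presheaf on $\Sm_k$ is a Nisnevich sheaf precisely when it is additive (sends finite disjoint unions to products and the empty scheme to the terminal object, i.e.\ a point for sheaves of sets and $0$ for sheaves of abelian groups) and sends every elementary distinguished square to a cartesian square. The key geometric point is this: in an elementary distinguished square with $X$ connected and $U \neq \emptyset$, the open immersion $U \hookrightarrow X$ is dense; moreover $V \to X$ is \'etale, so each nonempty component of $V$ dominates $X$, whence $W = V \times_X U \hookrightarrow V$ is dense as well. Birationality of $\F$ therefore makes both $\F(X) \isomto \F(U)$ and $\F(V) \isomto \F(W)$ isomorphisms, and a one-line diagram chase shows the square is cartesian: since $\F(V) \to \F(W)$ is bijective, the projection $\F(U) \times_{\F(W)} \F(V) \to \F(U)$ is bijective, and the composite $\F(X) \to \F(U) \times_{\F(W)} \F(V) \isomto \F(U)$ is the birational isomorphism, forcing the first map to be bijective. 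The degenerate case $U = \emptyset$ forces $V \isomto X$ and is trivial. Because this argument only uses that restriction along a dense open immersion is an isomorphism, it applies verbatim to $\Z(\F)$ once one knows $\Z(\F)$ is birational, provided $\Z(\F)$ is taken in the additive (reduced) sense so that $\Z(\F)(\emptyset) = 0$ and disjoint unions go to products.

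Granting $\aone$-invariance of $\F$, I would deduce flasqueness from the Brown--Gersten / cd-structure criterion: a Nisnevich sheaf of abelian groups $G$ has $H^n_{Nis}(-,G) = 0$ for all $n > 0$ as soon as each elementary distinguished square yields a \emph{short} exact Mayer--Vietoris sequence $0 \to G(X) \to G(U) \oplus G(V) \to G(W) \to 0$. Left-exactness is exactly the sheaf property established above; the remaining surjectivity of $G(U) \oplus G(V) \to G(W)$ is automatic for $G = \Z(\F)$, since $\Z(\F)(V) \to \Z(\F)(W)$ is already an isomorphism by the density of $W \hookrightarrow V$. Strict $\aone$-invariance is then formal: by definition one must check that each cohomology presheaf $U \mapsto H^n_{Nis}(U, \Z(\F))$ is $\aone$-invariant. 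For $n > 0$ these vanish by flasqueness and so are trivially $\aone$-invariant, while for $n = 0$ one has $H^0_{Nis}(U, \Z(\F)) = \Z[\F(U)]$, and applying $\Z[-]$ to the bijection $\F(U) \isomto \F(U \times \aone)$ supplied by $\aone$-invariance of $\F$ gives the required isomorphism.

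The main obstacle is the flasqueness step, or rather marshalling its ingredients correctly. The cd-structure criterion converts flasqueness into a concrete short-exactness statement, but one must (i) verify the density of $W \hookrightarrow V$ carefully, tracing generic points through the \'etale map $V \to X$, and (ii) keep scrupulous track of the additivity and empty-scheme conventions, since the naive free presheaf of abelian groups on a sheaf of sets is not additive and hence not itself a sheaf; it is precisely the reduced/additive free object whose flasqueness and strict $\aone$-invariance are being asserted. Once these bookkeeping points are settled, every step reduces to the single fact that $\F$, and hence $\Z(\F)$, turns dense open immersions into isomorphisms.
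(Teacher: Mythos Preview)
Your argument is correct and, for the sheaf property and birationality of $\Z(\F)$, essentially identical to the paper's. The genuine divergence is in the flasqueness step. The paper argues via \v Cech cohomology: given a Nisnevich cover $U \to X$ of an irreducible smooth $X$, one lifts the generic point to find a component of $U$ birational to $X$, and then observes that every face map $\Z(\F)(U^{\times n}) \to \Z(\F)(U^{\times n+1})$ is injective, killing all higher \v Cech cohomology. You instead invoke the Brown--Gersten/cd-structure criterion, reducing flasqueness to short-exactness of the Mayer--Vietoris sequence for each elementary distinguished square, which is immediate from the isomorphism $\Z(\F)(V) \isomt \Z(\F)(W)$ already established. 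Your route is more economical, since it recycles the density analysis of $W \hookrightarrow V$ rather than introducing a separate generic-point-lifting argument; the paper's route, on the other hand, is slightly more self-contained in that it does not appeal to the cd-structure machinery. Your explicit caveat about additivity and the empty-scheme convention for $\Z(\F)$ is well taken---the paper glosses over this point entirely.
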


\begin{proof}
To show that $\F$ is a Nisnevich sheaf is, we just have to check that $\F$ takes an elementary distinguished square
\[
\xymatrix{
V' \ar[r]\ar[d] & V \ar[d]^{\psi} \\
U \ar[r] & X,
}
\]
(where $\psi$ is \'etale, $U \to X$ is an open immersion, $X \setminus U$ is given the usual reduced scheme structure, and the map $\psi^{-1}(X \setminus U) \to X \setminus U$ is an isomorphism) to a cartesian square.  Since $\F$ is birational, both the bottom and top maps are isomorphisms and so the diagram is cartesian.  Now, if $\F$ is birational, then by definition $\Z(\F)$ is also birational, and by what we just showed $\Z(\F)$ is also a Nisnevich sheaf.

If $\F$ is also $\aone$-invariant, it follows immediately that $\Z(\F)$ is also $\aone$-invariant.  We will now show that $\Z(\F)$ is Nisnevich flasque.  To see this, recall that the Nisnevich cohomology can be computed by means of \u Cech cochains: \cite[p. 95]{MV} mentions this without proof, but the proof is essentially identical to the corresponding statement in the \'etale topology; one uses \u Cech-derived functor spectral sequence and the fact \cite[Lemma 1.18.1]{Nisnevich} that the higher cohomology sheaves of a Nisnevich sheaf of abelian groups vanish.  Therefore, suppose $X$ is an irreducible smooth scheme, $u: U \to X$ is a Nisnevich cover of $X$.  By lifting the generic point $\eta$ of $X$, we can find a component of $U$ that is birational to $X$.  Since each map $U^{\times n+1} \to U^{\times n}$ is also a Nisnevich cover, it follows that $\Z(\F)(U^{\times n}) \to \Z(\F)(U^{\times n+1})$ is injective and thus all higher Nisnevich cohomology of $\Z(\F)$ vanishes.
\end{proof}

\begin{cor}
\label{cor:birationalh0}
If $\F$ is a birational and $\aone$-invariant sheaf of sets, the canonical map $\F \to \Z(\F)$ induces an isomorphism $\H_0^{\aone}(\F) \to \Z(\F)$.
\end{cor}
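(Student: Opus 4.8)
The plan is to reduce the computation of $\H_0^{\aone}(\F)$ to a statement about the single sheaf $\Z(\F)$ and then to quote Lemma \ref{lem:birationalaoneinvariantimpliesstrictlyaoneinvariant} in order to see that $\aone$-localization does nothing. First I would unwind the definitions. Since $\F$ is a sheaf of sets, when regarded as a space it is constant in the simplicial direction; hence the free simplicial sheaf of abelian groups $\Z(\F)$ is again simplicially constant, and its normalized chain complex $C_*(\Z(\F))$ is quasi-isomorphic to $\Z(\F)$ concentrated in homological degree $0$. Thus $C_*^{\aone}(\F) = L_{\aone}C_*(\Z(\F))$ is nothing but $L_{\aone}$ applied to the complex $\Z(\F)$ placed in degree $0$.

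Next I would invoke Lemma \ref{lem:birationalaoneinvariantimpliesstrictlyaoneinvariant}: as $\F$ is birational and $\aone$-invariant, the sheaf $\Z(\F)$ is strictly $\aone$-invariant. Viewed as a complex concentrated in degree $0$, a strictly $\aone$-invariant sheaf is $\aone$-local, since testing the defining bijection against the generators $B = \Z(U)$ and their shifts reduces it to the $\aone$-invariance of the Nisnevich cohomology presheaves $U \mapsto H^{j}_{Nis}(U,\Z(\F))$ in every degree. Consequently the unit map $\Z(\F) \to L_{\aone}\Z(\F)$ is an isomorphism in $\Daone(k)$, so $C_*^{\aone}(\F)$ is quasi-isomorphic to $\Z(\F)$ in degree $0$, and passing to $H_0$ yields $\H_0^{\aone}(\F) \cong \Z(\F)$. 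A short naturality check identifies this isomorphism with the one induced by the canonical morphism $\F \to \Z(\F)$.

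I expect no serious obstacle: the corollary is essentially a formal consequence of the preceding lemma together with the fact that $L_{\aone}$ fixes complexes that are already $\aone$-local. The one point deserving genuine care is the passage from strict $\aone$-invariance to $\aone$-locality of the degree-$0$ complex, equivalently the assertion that a strictly $\aone$-invariant sheaf has $\aone$-invariant Nisnevich cohomology in all degrees; this is exactly what Lemma \ref{lem:birationalaoneinvariantimpliesstrictlyaoneinvariant} supplies via the Nisnevich flasqueness of $\Z(\F)$. The remaining verifications---that the discreteness of $\F$ collapses the chain complex to degree $0$, and that the comparison map is the canonical one---are routine.
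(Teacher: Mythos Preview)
Your proposal is correct and follows essentially the same route as the paper: unwind the definition $\H_0^{\aone}(\F) = H_0(L_{\aone}\Z(\F))$, then invoke Lemma \ref{lem:birationalaoneinvariantimpliesstrictlyaoneinvariant} to see that $\Z(\F)$ is already $\aone$-local so that $L_{\aone}$ does nothing. The paper's version is terser---it cites the Nisnevich flasqueness of $\Z(\F)$ directly as the reason for $\aone$-locality, whereas you phrase it via strict $\aone$-invariance---but the content is the same.
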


\begin{proof}
By definition $\H_0^{\aone}(\F) = H_0(L_{\aone}\Z(\F))$.  However, since $\Z(\F)$ is Nisnevich flasque, it follows that $\Z(\F)$ is $\aone$-local, i.e., the canonical map $L_{\aone}(\Z(\F)) \to \Z(\F)$ is an isomorphism.
\end{proof}

\begin{ex}
Suppose $X$ is an $\aone$-rigid smooth proper $k$-scheme (see \cite[\S 3 Example 2.4]{MV}).  Given an open dense immersion $U \to U'$, the map $X(U') \to X(U)$ is an isomorphism; indeed any such map is uniquely determined by where it sends the generic point of each component.  As a consequence $\Z(X)$ is a strictly $\aone$-invariant sheaf.  Because $\Z(X)$ is $\aone$-local, we see that the canonical map $C_*^{\aone}(X) = L_{\aone}\Z(X) \to \Z(X)$ is an isomorphism, and thus that $\H_0^{\aone}(X) = \Z(X)$.  Thus, $X$, $\Z(X)$, and $\H_0^{\aone}(X)$ are all birational sheaves and by Lemma \ref{lem:birationalaoneinvariantimpliesstrictlyaoneinvariant} all these sheaves are all strictly $\aone$-invariant.  As a consequence of Corollary \ref{cor:birationalh0}, we deduce that if $k$ is infinite and $X'$ is any smooth proper variety that is stably $k$-birationally equivalent to a smooth proper $\aone$-rigid variety $X$, then $\H_0^{\aone}(X') = \H_0^{\aone}(X)$.
\end{ex}

\subsection{Birational connected components and the main result}
Suppose $X$ is a smooth proper variety over a field $k$.  If $L/k$ is a separable, finitely generated extension, recall that two $L$-points in $X$ are $R$-equivalent if they can be connected by the images of a chain of morphisms from $\pone_L$ to $X$ (over $k$) \cite{Manin}.  There is a birational sheaf related to $R$-equivalence classes of points in $X$.

\begin{thm}
\label{thm:birationalpi0}
If $X$ is a smooth proper $k$-variety, there is a birational and $\aone$-invariant sheaf $\bpi_0^{b\aone}(X)$ together with a canonical map $X \to \pi_0^{b\aone}(X)$ functorial for morphisms of proper varieties such that for any separable finitely generated extension $L/k$ the induced map $X(L) \to \pi_0^{b\aone}(X)(L)$ factors through a bijection $X(L)/R \to \pi_0^{b\aone}(X)(L)$.
\end{thm}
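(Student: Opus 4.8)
The plan is to realize $\bpi_0^{b\aone}(X)$ as the birational localization of the naive $\aone$-homotopy presheaf and to extract the $R$-equivalence description from properness. First I would consider the presheaf $\mathcal{S}(X)$ on $\Sm_k$ sending a smooth $U$ to the quotient of $\hom(U,X)$ by the equivalence relation generated by naive $\aone$-homotopy (two morphisms being directly related if they are the restrictions along $U \times \{0\}$ and $U \times \{1\}$ of a single morphism $U \times \aone \to X$). Since morphisms and homotopies pull back along arbitrary $V \to U$, this is a genuine presheaf equipped with a canonical map $X \to \mathcal{S}(X)$ that is functorial in $X$. I would then check directly that $\mathcal{S}(X)$ is $\aone$-invariant: surjectivity of $\mathcal{S}(X)(U) \to \mathcal{S}(X)(U \times \aone)$ follows from the scaling homotopy $(u,t,s) \mapsto g(u,st)$ attached to any $g \colon U \times \aone \to X$, while injectivity follows by restricting a connecting homotopy along the zero section $U \to U \times \aone$.

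Next I would identify the generic stalks. For $U$ smooth and irreducible with function field $L = k(U)$, passing to the colimit over dense open $V \subseteq U$ presents $\mathcal{S}(X)_{\eta_U}$ as the quotient of $X(L)$ by the relation generated by morphisms $\aone_L \to X$. Here properness enters twice: a morphism $\aone_L \to X$ extends to $\pone_L \to X$ by the valuative criterion, so a naive chain yields an $R$-equivalence chain; conversely, given a morphism $\pone_L \to X$ through two $L$-points, an automorphism in $\operatorname{PGL}_2(L)$ moves those points into $\aone_L$ and exhibits the corresponding naive homotopy. Thus $\mathcal{S}(X)_{\eta_U} = X(L)/R$, and I would define $\bpi_0^{b\aone}(X)$ by these generic stalks (with products over connected components in general). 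This is birational essentially by construction, since the generic stalk depends only on a dense open.

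The hard part will be to promote these generic stalks to a presheaf on all of $\Sm_k$ — that is, to define the restriction along a morphism $f \colon U' \to U$ whose image is not dense. Such an $f$ induces no map of function fields, and instead requires a specialization map $X(k(U))/R \to X(\kappa(f(\eta_{U'})))/R$. The key point, and the reason properness is indispensable here, is that over a discrete valuation ring $\mathcal{O}$ the valuative criterion gives $X(\mathcal{O}) \isomto X(\operatorname{Frac}\mathcal{O})$, hence a specialization on points that descends to $R$-equivalence classes; I would then verify that these specialization maps satisfy the coherence (unramifiedness) axioms needed to assemble a genuine birational presheaf, following the formalism of birational sheaves in \cite{ABirational, MField}. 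I expect this bookkeeping, rather than any single computation, to be the main obstacle.

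Finally I would establish $\aone$-invariance and conclude. Because $\bpi_0^{b\aone}(X)$ is birational, $\aone$-invariance reduces to the bijectivity of $X(L)/R \to X(L(t))/R$ for every finitely generated separable extension $L/k$: surjectivity comes from connecting a class $\phi(t) \in X(L(t))$ to $\phi(0) \in X(L)$ via the scaling morphism $s \mapsto \phi(st)$, and injectivity comes from spreading an $R$-equivalence over $L(t)$ to a family over a dense open of the $t$-line and specializing at a suitable rational point (a step needing a little extra care when the residue field is finite). Once birationality and $\aone$-invariance are in hand, Lemma \ref{lem:birationalaoneinvariantimpliesstrictlyaoneinvariant} shows $\bpi_0^{b\aone}(X)$ is automatically a Nisnevich sheaf. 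The canonical map $X \to \bpi_0^{b\aone}(X)$ is the composite of $X \to \mathcal{S}(X)$ with passage to generic stalks; it is functorial in morphisms of proper varieties since these respect both $R$-equivalence and specialization, and the asserted factorization $X(L) \to X(L)/R \isomto \bpi_0^{b\aone}(X)(L)$ holds by construction.
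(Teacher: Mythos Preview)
Your outline is sound, but it is doing far more than the paper does.  The paper's proof is two sentences: everything except functoriality is imported wholesale from \cite[Theorem~6.2.1]{AM}, and functoriality is dispatched by the remark that a morphism $f\colon X\to Y$ of proper varieties induces maps $X(L)/R\to Y(L)/R$ for every finitely generated separable $L/k$, which---because the sheaves in question are birational---already determines a morphism of sheaves.  No construction is carried out in the paper itself.

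What you have sketched is, in effect, a proof of the cited theorem from \cite{AM}.  The strategy (naive $\aone$-quotient, identification of generic stalks with $X(L)/R$ via properness, then assembling a birational presheaf from these stalks) is the right one, and you correctly locate the crux: producing the restriction maps for non-dominant morphisms amounts to showing that specialization over a DVR descends to $R$-equivalence classes and that the resulting maps satisfy the coherence needed for an unramified/birational presheaf.  I would only caution that this step is a genuine theorem rather than bookkeeping---extending a map $\pone_K\to X$ across the two-dimensional regular scheme $\pone_{\mathcal O}$ can acquire indeterminacy at finitely many closed points of the special fibre, and one has to resolve and argue with the resulting chain of rational curves (this is essentially Koll\'ar's specialization of $R$-equivalence).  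Your approach buys self-containment and makes the role of properness transparent at every stage; the paper's approach buys brevity by outsourcing exactly this work to \cite{AM}.
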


\begin{proof}
Everything except the statement of functoriality is included in {\cite[Theorem 6.2.1]{AM}}.  Since $\bpi_0^{b\aone}(X)$ is a birational and $\aone$-invariant sheaf, to construct a morphism $\bpi_0^{b\aone}(Y) \to \bpi_0^{b\aone}(X)$, it suffices to observe that by the definition of $R$-equivalence a morphism $f: X \to Y$ induces morphisms $X(L)/R \to Y(L)/R$ for every finitely generated separable extension $L/k$.
\end{proof}

If $X$ is a smooth proper variety, we can consider the sheaf $\Z(\bpi_0^{b\aone}(X))$.  By Lemma \ref{lem:birationalaoneinvariantimpliesstrictlyaoneinvariant}, it follows that $\Z(\bpi_0^{b\aone}(X))$ is a strictly $\aone$-invariant sheaf, and Corollary \ref{cor:birationalh0} gives rise to a canonical identification $\H_0^{\aone}(\bpi_0^{b\aone}(X)) \isomt \Z(\bpi_0^{b\aone}(X))$.  As a consequence of Theorem \ref{thm:birationalpi0} we deduce the existence of a canonical morphism
\[
\phi_X: \H_0^{\aone}(X) \longrightarrow \Z(\bpi_0^{b\aone}(X)).
\]
Because $\Z(\bpi_0^{b\aone}(X))$ is a strictly $\aone$-invariant sheaf, existence of this morphism also follows immediately from \cite[Lemma 2.2.3]{ABirational}, which states that $\H_0^{\aone}(X)$ is initial among strictly $\aone$-invariant sheaves $M$ admitting a morphism of sheaves $X \to M$.

%On the other hand the canonical morphism $X \to \H_0^{\aone}(X)$ necessarily factors through a morphism $X \to \pi_0^{b\aone}(X)$ since the former is $\aone$-local.  By the universal property of the free group, there is an induced morphism $\Z\pi_0^{b\aone}(X) \to \H_0^{\aone}(X)$.  By construction, the composite map $\Z\pi_0^{b\aone}(X) \to \H_0^{\aone}(X) \to \Z\pi_0^{b\aone}(X)$ is an isomorphism.

\begin{rem}
It seems reasonable to expect that the morphism $\phi_X: \H_0^{\aone}(X) \to \Z(\bpi_0^{b\aone}(X))$ is an isomorphism.  Since our goal is to get as quickly as possible to the connection with rational points we did not pursue this further.
\end{rem}

\begin{cor}
\label{cor:main}
If $X$ is a smooth proper $k$-variety, then the set $X(k)$ is non-empty if and only if the map $\H_0^{\aone}(X) \to \Z$ induced by the structure map is a split surjection.
\end{cor}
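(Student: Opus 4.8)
The plan is to reduce both implications to functoriality of $\H_0^{\aone}(-)$ together with the linearization $\phi_X \colon \H_0^{\aone}(X) \to \Z(\bpi_0^{b\aone}(X))$ constructed above. First note that the space $\Spec k$ is the point, so $\Z(\Spec k)$ is the constant sheaf $\Z$, which is already strictly $\aone$-invariant and hence $\aone$-local; thus $\H_0^{\aone}(\Spec k) = \Z$, and the map in the statement is $\H_0^{\aone}(p)$, where $p \colon X \to \Spec k$ denotes the structure morphism. For the easy implication, a rational point is a section $s \colon \Spec k \to X$ of $p$, and applying the functor $\H_0^{\aone}(-)$ produces a section $\H_0^{\aone}(s) \colon \Z \to \H_0^{\aone}(X)$ of $\H_0^{\aone}(p)$; hence $\H_0^{\aone}(p)$ is a split surjection.

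For the converse I would pass through the birational sheaf $\bpi_0^{b\aone}(X)$. Since $\bpi_0^{b\aone}(\Spec k)$ is the point, $\Z(\bpi_0^{b\aone}(\Spec k)) = \Z$, and functoriality of $\bpi_0^{b\aone}$ (Theorem \ref{thm:birationalpi0}) followed by $\Z(-)$ yields a map $q \colon \Z(\bpi_0^{b\aone}(X)) \to \Z$ induced by $p$. Naturality of $\phi$ supplies a commutative square whose vertical arrows are $\phi_X$ and $\phi_{\Spec k} = \operatorname{id}_{\Z}$ and whose horizontal arrows are $\H_0^{\aone}(p)$ and $q$, so that $q \circ \phi_X = \H_0^{\aone}(p)$. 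Consequently, if $\sigma$ is a section of $\H_0^{\aone}(p)$, then $\phi_X \circ \sigma$ is a section of $q$, so $q$ is itself a split surjection of sheaves.

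It then remains to evaluate $q$ on $\Spec k$. A section of $q$ evaluates to a section of $q(k)$, so $q(k)$ is a surjection of abelian groups. Because $\bpi_0^{b\aone}(X)$ is birational, Lemma \ref{lem:birationalaoneinvariantimpliesstrictlyaoneinvariant} ensures that $\Z(\bpi_0^{b\aone}(X))$ is already a Nisnevich sheaf, whence $\Z(\bpi_0^{b\aone}(X))(\Spec k) = \Z[\bpi_0^{b\aone}(X)(k)] = \Z[X(k)/R]$, the last identification by Theorem \ref{thm:birationalpi0} applied to the trivial extension $L = k$. Under these identifications $q(k)$ is the augmentation $\Z[X(k)/R] \to \Z$, and a surjection from $\Z[X(k)/R]$ onto $\Z$ is possible only if $X(k)/R$ is non-empty; therefore $X(k) \neq \emptyset$.

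The conceptual heart of the argument, namely the morphism $\phi_X$ linearizing $X$ through its $R$-equivalence classes, is already in place, so the remaining work is essentially formal. The one point that genuinely needs care is the interchange of evaluation at $\Spec k$ with the free-sheaf functor: this is exactly where birationality enters, via Lemma \ref{lem:birationalaoneinvariantimpliesstrictlyaoneinvariant}, to guarantee that no sheafification intervenes and that sections over $\Spec k$ recover the free abelian group on $X(k)/R$. I would also want to confirm the naturality of $\phi$ with respect to the structure map, which follows from naturality of the canonical map $X \to \bpi_0^{b\aone}(X)$ and the identification $\H_0^{\aone}(\bpi_0^{b\aone}(X)) = \Z(\bpi_0^{b\aone}(X))$ of Corollary \ref{cor:birationalh0}.
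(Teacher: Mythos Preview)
Your argument is correct and follows essentially the same route as the paper: both use functoriality of $\phi_X$ to factor the structure map through $\Z(\bpi_0^{b\aone}(X))$ and then read off a nonzero element of $\Z[X(k)/R]$ from the splitting. You are simply more explicit than the paper about naturality of $\phi$ and about why $\Z(\bpi_0^{b\aone}(X))(\Spec k)=\Z[X(k)/R]$ (a point the paper leaves implicit, and which in fact needs no appeal to Lemma~\ref{lem:birationalaoneinvariantimpliesstrictlyaoneinvariant} since $\Spec k$ admits only trivial Nisnevich covers).
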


\begin{proof}
If $X(k)$ is non-empty, then we get a morphism $\Z = \H_0^{\aone}(\Spec k) \to \H_0^{\aone}(X)$ that splits the map induced by the structure morphism.  Conversely, note that the map $\H_0^{\aone}(X) \to \Z(\bpi_0^{b\aone}(X))$ is functorial in $X$, and thus the morphism $\H_0^{\aone}(X) \to \Z$ factors through the morphism $\phi_X$.  A splitting $\Z \to \H_0^{\aone}(X)$ therefore gives rise to a non-trivial morphism $\Z \to \Z(\bpi_0^{b\aone}(X))$, i.e., an element of $\Z(\bpi_0^{b\aone}(X))(k)$.  The group $\Z(\bpi_0^{b\aone}(X))(k)$ is by Theorem \ref{thm:birationalpi0} the free abelian group on the set $X(k)/R$.  Since the group $\Z(\bpi_0^{b\aone}(X))(k)$ is a non-trivial free abelian group, we deduce that $X(k)/R$ has at least $1$ element, and therefore $X(k)$ is non-empty.
\end{proof}

\begin{proof}[Proof of Theorem \ref{thmintro:main}]
Combine Corollary \ref{cor:main} and Proposition \ref{prop:hurewicz}.
\end{proof}

\begin{rem}
\label{rem:rationalizedversion}
The rationalized $\aone$-derived category is obtained by following the construction of the $\aone$-derived category sketched above and replacing abelian groups by $\Q$-vector spaces throughout.  Replacing $\Z$ by $\Q$ in all of the above allows one to deduce that existence of $k$-rational point is detected by the {\em rationalized} $\aone$-derived category.  To be precise, if $X$ is a smooth proper $k$-scheme, then $X$ has a $k$-rational point if and only if the canonical map $\H_0^{\aone}(X,\Q) \to \Q$ induced by the structure morphism $X \to \Spec k$ is a split epimorphism.  This statement also implies a statement about an appropriate ``rational" version of the stable $\aone$-homotopy category of $S^1$-spectra, but we leave this to the reader.
\end{rem}

%\begin{cor}
%\label{cor:h0description}
%Suppose $X$ is a smooth proper $k$-variety.  The morphism
%\[
%r_X: \H_0^{\aone}(X) \to \Z(\pi_0^{b\aone}(X))
%\]
%is an isomorphism.
%\end{cor}

%\begin{proof}
%We claim there is a morphism of sheaves $\pi_0^{b\aone}(X) \to \H_0^{\aone}(X)$.  Indeed, if $M$ is an $\aone$-local space of simplicial dimension $0$, then by \cite[Lemma 2.2.1]{AsokBirational} for any smooth scheme $X$ there is a canonical bijection
%\[
%\hom_{\Shv_{Nis}(\Sm_k)}(\pi_0^{\aone}(X),M) \isomto \hom_{\Shv_{Nis}(\Sm_k)}(X,M)
%\]
%induced by the epimorphism $X \to \pi_0^{\aone}(X)$.  In particular, since $\pi_0^{b\aone}(X)$ is $\aone$-local, the canonical map $X \to \pi_0^{b\aone}(X)$ factors through a map $X \to \pi_0^{\aone}(X)$.

%The sheaf $\H_0^{\aone}(X)$ is strictly $\aone$-invariant and thus $\aone$-local; furthermore it admits a map from $X$.  Similarly, $\pi_0^{b\aone}(X)$ is $\aone$-local and admits a map from $X$

%The canonical map $X \to \H_0^{\aone}(X)$ factors as a morphism $X \to \pi_0^{\aone}(X) \to \H_0^{\aone}(\pi_0^{\aone}(X))$ by \cite[Proposition 2.2.5]{AsokBirational}.
%\end{proof}

\begin{footnotesize}
\bibliographystyle{alpha}
\bibliography{Requivalenceandaonehomology}
\end{footnotesize}
\end{document}